 \title[Mori dream spaces of Calabi-Yau type]{Mori dream spaces of Calabi-Yau type
 and the log canonicity of the Cox rings}
\author{Yujiro Kawamata and Shinnosuke  Okawa}
\address{Graduate School of Mathematical Sciences, University of Tokyo,
Komaba, Meguro, Tokyo, 153-8914, Japan \newline
\indent
Department of Mathematics, Faculty of Science, King Abdulaziz University,
P. O. Box 80257, Jeddah 21589, Saudi Arabia}
\email{kawamata@ms.u-tokyo.ac.jp}
\address{Graduate School of Mathematical Sciences, 
University of Tokyo, 3-8-1 Komaba, Meguro-ku, Tokyo 153-8914, Japan.}
\email{okawa@ms.u-tokyo.ac.jp}
\date{\today}
\subjclass[2010]{Primary 14J32; Secondary 14J45, 14B05, 14E30}
\keywords{Cox rings, Mori dream spaces, varieties of Calabi-Yau type, varieties of Fano type}
\DeclareMathOperator{\Spec}{Spec}
\DeclareMathOperator{\Pic}{Pic}
\DeclareMathOperator{\Cl}{Cl}
\DeclareMathOperator{\divi}{div}
\DeclareMathOperator{\OO}{\mathcal{O}}
\DeclareMathOperator{\rank}{rank}
\newtheorem{theo}{Theorem}[section]
\newtheorem{prop}[theo]{Proposition}
\newtheorem{lem}[theo]{Lemma}
\newtheorem*{claim}{Claim}
\theoremstyle{definition}
\newtheorem{defi}[theo]{Definition}
\newtheorem{rem}[theo]{Remark}
\newtheorem*{ack}{Acknowledgments} 
\newtheorem{step}{Step}
\begin{document}
\bibliographystyle{amsalpha+}
 
 \maketitle
 
\begin{abstract}
We prove that a Mori dream space over a field of characteristic zero is of Calabi-Yau type
if and only if its Cox ring has at worst log canonical singularities. 
By slightly modifying the arguments we also reprove the characterization of
the varieties of Fano type by the log terminality of the Cox rings.
\end{abstract}


\section{Introduction}\label{intro}
A normal projective variety $X$ is said to be
of Fano type if there exists a boundary $\mathbf{Q}$-divisor $\Delta$ on $X$
such that $(X,\Delta)$ has klt singularities and $-(K_X+\Delta)$ is ample.
\cite{bchm} proved that a $\mathbf{Q}$-factorial variety of Fano type is a Mori dream space.
Then \cite[Theorem 1.1]{gost} proved that a $\mathbf{Q}$-factorial variety of Fano type
can be characterized as a Mori dream space whose Cox ring has at worst
log terminal singularities. 
In \cite{gost} they also considered the Calabi-Yau version and
proved a similar result by assuming some conjecture (see
\cite[Theorem 4.13]{gost}). The conjecture has been partially verified in \cite[Theorem 3.3]{ft},
so that their statement is generalized in the corresponding cases \cite[Theorem 4.10]{gost}.

In this paper we give a conjecture-free proof to the Calabi-Yau version
of the result. Namely, we verify the following theorem.

\begin{theo}\label{characterization}
Let $X$ be a Mori dream space over a field of characteristic zero.
Then $X$ is of Calabi-Yau type if and only if the Cox ring of $X$ has at worst log canonical
singularities.
\end{theo}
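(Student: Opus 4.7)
Write $R=\mathrm{Cox}(X)$, $Y=\Spec R$, and let $T=\mathrm{Hom}(\Cl(X),\mathbb G_m)$ be the Picard torus acting on $Y$ via the $\Cl(X)$-grading. Let $\hat Y\subset Y$ denote the complement of the irrelevant locus, a $T$-invariant open subset of codimension at least two, so that $\pi\colon\hat Y\to X$ is a good torus quotient. For each prime divisor $D\subset X$ the canonical section $s_D\in R_{[D]}$ cuts out a $T$-invariant prime divisor $\hat D\subset Y$. The backbone of the argument will be the discrepancy formula
\[
K_Y+\sum_D \hat D\;\sim_{\mathbb Q}\;\pi^*\Bigl(K_X+\sum_D D\Bigr)
\]
in the $T$-equivariant $\mathbb Q$-class group, from which it follows that for any $\mathbb Q$-boundary $\Delta=\sum a_D D$ on $X$ with $a_D\in[0,1]\cap\mathbb Q$, the pullback pair is $(\hat Y,\sum(1-a_D)\hat D)$ and log-discrepancies correspond.

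For the ``only if'' direction, I would start with $(X,\Delta)$ lc and $K_X+\Delta\sim_{\mathbb Q}0$. The formula above produces a pair $(\hat Y,\sum(1-a_D)\hat D)$ on $\hat Y$ with trivial log-canonical divisor; because $(X,\Delta)$ is lc and $\pi$ is a good torus quotient, this pullback pair is lc, and since its boundary is effective $\hat Y$ itself is lc. It then remains to propagate log canonicity across $Y\setminus\hat Y$: since $Y$ is normal and $T$-equivariant and the irrelevant complement has codimension at least two and is governed by the cone structure of $Y$ over $X$, a standard cone-type discrepancy argument should extend the lc property from $\hat Y$ to all of $Y$.

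For the converse direction, assume that $Y$ has lc singularities. The plan is to produce rational coefficients $a_D\in[0,1]$ with $\sum(1-a_D)\hat D\sim_{\mathbb Q}-K_Y$ in the $T$-equivariant class group and with $(Y,\sum(1-a_D)\hat D)$ still lc; descending via $\pi$ then yields $\Delta=\sum a_D D$ realising the Calabi-Yau type condition on $X$. The construction of such coefficients is the principal obstacle. Log canonicity is a closed condition, so the coefficients cannot simply be perturbed, and unlike the klt/Fano case of \cite{gost} one cannot invoke BCHM-style ampleness tools to produce the boundary. I expect to proceed by using the formula for $K_Y$ to locate $-K_Y$ in the $T$-equivariant effective cone spanned by the $\hat D$, and then to select an extremal point of the rational polytope of admissible coefficient tuples, verifying lc-ness by a $T$-equivariant analysis of log canonical centres of $Y$. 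This is precisely the step at which the conjecture required in \cite[Theorem 4.13]{gost} should be circumvented: by choosing the boundary on $Y$ first and descending through $\pi$, one never has to reduce the lc question on $X$ to a klt question via the finite covers whose behaviour the conjecture was designed to control.
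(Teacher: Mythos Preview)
Your plan has genuine gaps in both directions, and the displayed formula is not right: with the $T$-action on $\hat Y$ free, $\pi$ is smooth, so $K_{\hat Y}=\pi^*K_X$ and $\pi^*\Delta=\sum a_D\hat D$ with no $a_D\mapsto 1-a_D$ swap; the sum $\sum_D D$ over all prime divisors is not a divisor at all. More seriously, in the ``only if'' direction the step you call ``a standard cone-type discrepancy argument'' is the entire content. Log canonicity does \emph{not} propagate across a closed subset of codimension $\ge 2$ (a cone over a variety of general type is smooth off its vertex yet not lc there), and $Y=\Spec R$ is not a cone over $X$, so no off-the-shelf formula applies. The paper supplies precisely the missing partial resolution: it forms the affine \emph{bundle} $W=\Spec_X\mathrm{Sym}\bigl(\bigoplus_i\mathcal O_X(A_i)\bigr)$ for ample $A_i$, so that $W\to Z=\Spec R(X,\Gamma')$ is projective birational with explicit exceptional divisors $E_i$; then $(W,\sum E_i+\pi^*\Delta)$ is lc with $\mathbb Q$-trivial log canonical class, its pushforward $(Z,\Delta_Z)$ is lc, and finally $\Spec R(X,\Gamma)\to Z$ is small. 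Your sketch has no analogue of $W$.

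In the ``if'' direction, ``locating $-K_Y$ in the $T$-equivariant effective cone spanned by the $\hat D$'' is, under $\Cl^T(Y)_{\mathbb Q}\simeq\Cl(X)_{\mathbb Q}$, exactly the assertion that $-K_X$ is $\mathbb Q$-effective --- which is what must be proved, not assumed. Even granting an effective $\Delta\sim_{\mathbb Q}-K_X$, lc-ness of $X$ alone does not make $(X,\Delta)$ lc when $-K_X$ is not semi-ample, so the ``extremal point of a polytope'' idea has no traction. The paper first proves $\mathbb Q$-effectivity of $-K_X$ by a covering-curve calculation (again via an affine-bundle model, reading off $K_X\sim_{\mathbb Q}-\sum(1-e_i)D_i$ from the discrepancies $e_i\le 1$ of the exceptional $E_i$), then runs the $(-K_X)$-MMP to reach a model $X_N$ with $-K_{X_N}$ semi-ample, uses the GIT-quotient description $X_N=U/T$ to see that $X_N$ is lc, takes a \emph{general} $\Delta_N\sim_{\mathbb Q}-K_{X_N}$ so that $(X_N,\Delta_N)$ is lc, and traces the MMP back. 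This MMP step is exactly what circumvents the conjecture invoked in \cite{gost}; your outline does not provide a substitute.
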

A normal projective variety $X$ is said to be of Calabi-Yau type if
$(X,\Delta)$ has log canonical singularities and $K_X+\Delta$ is numerically trivial
for some boundary $\mathbf{Q}$-divisor $\Delta$.
Note that a variety of Calabi-Yau type is not necessarily a Mori dream space.

By slightly modifying the arguments, 
we can also reprove \cite[Theorem 1.1]{gost} in Theorem \ref{gost}.
During the preparation of this paper, we received a preprint from Morgan Brown \cite{b}
in which he independently gave a different proof to the `only if' directions of Theorem
\ref{characterization} and Theorem \ref{gost}.

We explain a little bit about the proof of Theorem \ref{characterization}.
For the `only if' direction, we first take a set of $r=\rank\Pic{(X)}$ ample divisors
which are linearly independent in $\Pic{(X)}_{\mathbf{Q}}$, and
take the affine space bundle 
$\pi:Y = \text{Spec}_X\text{Sym}(\bigoplus_i \mathcal{O}_X(A_i))\to X$.
From a boundary $\mathbf{Q}$-divisor $\Delta$ on $X$ which makes the pair $(X,\Delta)$
log Calabi-Yau, we construct a boundary divisor $\Delta_Y$ on $Y$ such that
the pair $(Y,\Delta_Y)$ is also log canonical and $K_Y+\Delta_Y$ is 
$\mathbf{Q}$-linearly trivial.
By contracting the zero section of $\pi$, we obtain a birational morphism
$f: Y \to Z$ such that $(Z,\Delta_Z)$ is log canonical and $K_Z+\Delta_Z$ is 
$\mathbf{Q}$-linearly trivial for some $\mathbf{Q}$-divisor $\Delta_Z$.
It turns out that there exists a small birational morphism from the spectrum of the Cox ring of X to $Z$. Thus we prove the log canonicity of the Cox ring.

In our proof of the `if' direction of
Theorem \ref{characterization}, we first derive the $\mathbf{Q}$-effectivity of the anti-canonical divisor
of $X$ from the log canonicity of the Cox ring, by using a similar 
construction as above and applying the numerical characterization of
the pseudo-effective divisors due to \cite{bdpp} (for a Mori dream space, this result is much easier to prove;
see Proposition \ref{bdpp for Mori dream spaces}).
Since $X$ is assumed to be a Mori dream space,
we can apply the anti-canonical MMP which terminates in a semi-ample model
in the same way as in \cite{gost}. 
By the standard
facts from \cite{hk}, the semi-ample model is the quotient of an open subset of the spectrum of
the Cox ring by the dual torus action of $\Pic{(X)}$. We can assume that
the action is free, so that the log canonicity descends to the quotient. Therefore
the semi-ample model is of Calabi-Yau type. As demonstrated in \cite[Proof of Theorem 1.2]{gost},
we can trace back the anti-canonical MMP to prove that $X$ itself is of Calabi-Yau type. 

Throughout the paper, we work over a field $k$ of characteristic zero.


\begin{ack}
The second author would like to thank the co-authors of the paper \cite{gost}
for the discussion during the preparation of \cite{gost}, which was quite helpful for him
to digest the contents of the paper.

The research in this paper started from a discussion of the authors with
Caucher Birkar and Yoshinori Gongyo in the conference at
Chulalongkorn University in December 2011. We would like to thank them,
especially to Caucher and his wife Puttachat Suwankiri for hospitality.

The first author was supported by Grant-in-Aid for Scientific Research (A) 22244002.
The second author was supported by Grant-in-Aid for JSPS fellows 22-849, and
by the GCOE program ``Research and Training Center for New Development in 
Mathematics''.

\end{ack}

\section{Preliminaries}
We start with the definitions of varieties of Fano type and Calabi-Yau type.

\begin{defi}[cf. {\cite[Lemma-Definition 2.6]{prokshok-mainII}}]\label{Fano pair}
Let $X$ be a projective normal variety over a field and $\Delta$ an effective $\mathbf{Q}$-divisor on $X$
such that $K_X+\Delta$ is $\mathbf{Q}$-Cartier. 
\begin{enumerate}[(i)]
\item We say that $(X,\Delta)$ is a {\em klt Fano pair} if $-(K_X+\Delta)$ is ample and $(X, \Delta)$ is klt. 
We say that $X$ is of {\em Fano type} if there exists an effective $\mathbf Q$-divisor $\Delta$ on $X$ such that $(X,\Delta)$ is a klt Fano pair. 
\item 
We say that $X$ is of \textit{Calabi--Yau type} if 
there exits an effective $\mathbf{Q}$-divisor $\Delta$ such that $K_X+\Delta \sim_{\mathbf{Q}}0$ and $(X, \Delta)$ is log canonical.  
\end{enumerate}
\end{defi}

Next we give the definition of Mori dream spaces.

\begin{defi}[cf. \cite{hk}]\label{Mori dream space} 
A normal projective variety $X$ over a field is called a \textit{Mori dream space}
if $X$ satisfies the following three conditions:
\begin{enumerate}[(i)]
\item $X$ is $\mathbf{Q}$-factorial, $\Pic{(X)}$ is finitely generated,
and $\Pic{(X)}_{\mathbf{Q}} \simeq \mathrm{N}^1{(X)}_{\mathbf{Q}},$\label{fin_pic}
\item The nef cone $\mathrm{Nef}{(X)}$ is the affine hull of finitely many semi-ample
line bundles, 
\item there exists a finite collection of small birational maps $f_i: X \dasharrow X_i$
such that each $X_i$ satisfies (i) and (ii), and that the closed movable cone $\mathrm{Mov}{(X)}$ is the union
of the $f_i^*(\mathrm{Nef}{(X_i)})$.
\end{enumerate}
\end{defi}

The following is one of the most important properties of Mori dream spaces.

\begin{prop}{ (\cite[Proposition 1.11]{hk})}\label{mmp on mds}
Let $X$ be a $\mathbf{Q}$-factorial Mori dream space. 
Then for any divisor $D$ on $X$, a $D$-MMP can be run and terminates. 
\end{prop}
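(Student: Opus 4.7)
The plan is to run the $D$-MMP by interpolating along a straight line in $\mathrm{N}^1{(X)}_{\mathbf{R}}$ from an ample class to the class of $D$, and reading off each MMP step from the Mori chamber decomposition supplied by Definition \ref{Mori dream space}.

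More precisely, I would choose an ample class $A$ on $X$ and consider the family $D_t = (1-t) A + t D$ for $t \in [0,1]$. Conditions (ii) and (iii) of Definition \ref{Mori dream space} together imply that the movable cone $\mathrm{Mov}{(X)}$ admits a finite chamber decomposition whose maximal chambers are the pullbacks $f_i^{\ast}(\mathrm{Nef}{(X_i)})$, and that outside $\mathrm{Mov}{(X)}$ only finitely many divisorial parts of the stable base locus can appear. Hence the segment $\{[D_t] : t \in [0,1]\}$ crosses only finitely many walls, at parameter values $0 < t_1 < \cdots < t_N$.

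Each wall crossing is then realised as an MMP step. A wall in the interior of $\mathrm{Mov}{(X)}$, separating $f_i^{\ast}(\mathrm{Nef}{(X_i)})$ from $f_{i'}^{\ast}(\mathrm{Nef}{(X_{i'})})$, gives a flip $X_i \dashrightarrow X_{i'}$: the curves contracted by the common small contraction $X_i \to Z$ have negative intersection with $[D_{t_j+\epsilon}]$ and positive intersection on $X_{i'}$. A wall lying on a facet of $\mathrm{Mov}{(X)}$ introduces a new divisorial component in the stable base locus of $D_t$, and its contraction is a divisorial contraction onto a variety of strictly smaller Picard rank which itself satisfies the Mori dream space axioms, so the procedure may be iterated.

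Termination is then automatic: the collection of small modifications $\{f_i\}$ is finite, and each divisorial step strictly decreases $\rank \Pic{(X)}$, so only finitely many MMP steps can occur. The main technical hurdle is to verify that each geometric wall crossing predicted by the chamber decomposition really corresponds to an MMP step in the classical sense, i.e.\ that there exists a $D_t$-negative extremal contraction on each intermediate model whose flip or divisorial contraction coincides with the map supplied by the chamber decomposition. This is handled by applying the cone and contraction theorems to each intermediate Mori dream space and invoking condition (iii) to supply the small $\mathbf{Q}$-factorial modification on the other side of each flipping wall.
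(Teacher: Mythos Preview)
The paper does not supply its own proof of this proposition: it is stated with the citation \cite[Proposition 1.11]{hk} and no argument is given. Your sketch is essentially the original Hu--Keel argument --- running the $D$-MMP by moving along a segment from an ample class to $[D]$ and interpreting wall-crossings in the Mori chamber decomposition as flips or divisorial contractions --- so there is nothing to compare against in this paper beyond the bare citation.

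One small omission worth noting: your outline treats the cases where the segment stays inside the pseudo-effective cone, but you should also say what happens when $[D]$ (or some $[D_t]$) exits the pseudo-effective cone. In that situation the terminal step of the $D$-MMP is a Mori fibre space rather than a semi-ample model, and this case is used later in the paper (in the proof of the Claim following Proposition~\ref{bdpp for Mori dream spaces}). Otherwise your sketch is a faithful summary of the cited result.
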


The notion of multi-section rings and the Cox rings are repeatedly used in the studies of Mori dream spaces.

\begin{defi}[Multi-section rings and Cox rings]\label{def of cox ring}
Let $X$ be an integral normal scheme. For a semi-group $\Gamma$ of Weil divisors on $X$,
the $\Gamma$-graded ring
\begin{equation*}
R(X, \Gamma)=\bigoplus_{D\in\Gamma}\text{H}^0(X,\mathcal{O}_X(D))
\end{equation*}
is called the \textit{multi-section ring} of $\Gamma$.

Suppose that the divisor class group $\Cl{(X)}$ is finitely generated.
For such $X$, choose a group $\Gamma$ of Weil divisors on $X$ such that
$\Gamma_{\mathbf{Q}}\to \Cl{(X)}_{\mathbf{Q}}$ is an isomorphism.
Then the multi-section ring $R(X, \Gamma)$ is called a \textit{Cox ring} of $X$.
\end{defi}

\begin{rem}[{See \cite[Remark 2.18]{gost} for details}]
As seen above, the definition of Cox rings depends on the choice of the group $\Gamma$.
We can prove that the ambiguity does not affect the basic properties of rings,
such as finite generation, log terminality (log canonicity), etc.
In fact, there is a canonical way to define Cox rings without ambiguity (up to isomorphisms)
due to Hausen \cite{hau}.
We can check that the properties of rings mentioned above
holds for his Cox ring if and only if ours has the same properties.
Since our definition of Cox rings as multi-section rings is easier for
calculation, we adopt our definition in this paper.
\end{rem}

We need the following intersection-theoretic characterization of
effective (resp. big) divisors on a Mori dream space.

\begin{prop}\label{bdpp for Mori dream spaces}
Let $D$ be a divisor on a Mori dream space $X$. Then
$X$ is $\mathbf{Q}$-effective (resp. big) if and only if its intersection number with
any curve in any covering family is at least (strictly greater than) zero.
\end{prop}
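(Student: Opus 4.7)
The plan is to use the MMP available on a Mori dream space (Proposition~\ref{mmp on mds}) together with direct arguments on effective representatives, bypassing the duality/mass-transport input of \cite{bdpp}. The \emph{only if} directions are immediate: if $D\sim_{\mathbf{Q}}\sum a_iD_i$ with $a_i\ge 0$ and $D_i$ prime, then a general member $C_t$ of a covering family is not contained in any $D_i$, so $D\cdot C_t\ge 0$; if in addition $D=A+E$ with $A$ ample and $E$ effective, then $A\cdot C_t>0$ upgrades the inequality to $D\cdot C_t>0$.

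For the \emph{if} direction in the effective case I argue by contrapositive. Assume that $D$ is not $\mathbf{Q}$-effective and run the $D$-MMP on $X$, which terminates by Proposition~\ref{mmp on mds} at some birational model $X'$. Nef divisors on a projective variety are pseudo-effective (take the limit of $D+\epsilon H$ for $H$ ample), hence $\mathbf{Q}$-effective on a Mori dream space since the effective cone is rational polyhedral and closed, so the MMP cannot end with the strict transform $D'$ nef; it must terminate at a Mori fibre space $\pi\colon X'\to Y$ on which $D'$ is $\pi$-anti-ample. A complete intersection of general $\pi$-ample divisors on a general fibre of $\pi$ produces a covering family $\{C'_t\}$ on $X'$ with $D'\cdot C'_t<0$. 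Because every MMP step is either a flip or a divisorial contraction of a single prime divisor, a general $C'_t$ lies in the isomorphism locus of $X\dashrightarrow X'$, and the projection formula applied step by step yields $D\cdot C_t=D'\cdot C'_t<0$ for the strict transform $C_t\subset X$, contradicting the hypothesis. The bigness case reduces to this by perturbation: if $D$ is not big, then $D-\epsilon A$ fails to be $\mathbf{Q}$-effective for every rational $\epsilon>0$ (and any fixed ample $A$); finiteness of the SQMs of a Mori dream space forces only finitely many possible MMP outputs, so a single covering family $\{C_t\}$ satisfies $(D-\epsilon_n A)\cdot C_t<0$ along some sequence $\epsilon_n\to 0^+$, yielding $D\cdot C_t\le 0$ in the limit.

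The main technical point, and the one requiring careful bookkeeping, is the invariance of intersection numbers along the MMP. At a divisorial contraction $\phi\colon X_i\to X_{i+1}$ with exceptional prime $E$ one has $\phi^*D_{i+1}=D_i+aE$ for some $a\in\mathbf{Q}$, and one must check that a general curve $C_{i+1}$ in the covering family avoids $\phi(E)$ so that its strict transform $C_i$ is disjoint from $E$ and the correction term $aE\cdot C_i$ vanishes; for flips, the statement is essentially automatic because they are isomorphisms in codimension one. Once this bookkeeping is arranged, the proposition follows by induction on the length of the MMP.
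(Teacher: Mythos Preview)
Your argument for the $\mathbf{Q}$-effective case is essentially the paper's: run the $D$-MMP to a Mori fibre space, take a covering curve in a general fibre, and transport it back to $X$. The paper packages the transport step a little differently---observing that each MMP step is surjective in codimension one, so there is an open $U\subset X_N$ with complement of codimension $\ge 2$ on which all the maps are isomorphisms, and a general fibre curve chosen inside $U$ has a well-defined strict transform on $X$ with the same intersection numbers---but this is exactly your step-by-step bookkeeping in disguise.

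Where you genuinely diverge is in the bigness case. The paper treats ``$\mathbf{Q}$-effective but not big'' directly: run the $D$-MMP to a model $X_N$ on which $D_N$ is semi-ample, take the Iitaka fibration $X_N\to Y$ (positive relative dimension since $D$ is not big), and pull back a covering fibre curve to obtain $C$ with $D\cdot C=0$. Your perturbation route---apply the effective case to $D-\epsilon A$ and pass to a limit via pigeonhole on the finitely many MMP outcomes---also works, but one point needs tightening: the finiteness you invoke is not that of the SQMs. Divisorial contractions can occur in the $(D-\epsilon A)$-MMP, so the terminal models $X_N$ need not be SQMs of $X$, and the rational map $X\dashrightarrow X_N$ (hence the pulled-back covering family) is not determined by an SQM alone. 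What you actually need is the finiteness of the Mori chambers of $\overline{\mathrm{Eff}}(X)$, equivalently of the rational contractions of $X$, which is part of the Hu--Keel structure theory; with that correction your pigeonhole-and-limit argument goes through. The paper's Iitaka-fibration approach is more direct and sidesteps this finiteness input entirely, while your approach has the virtue of reducing the big case formally to the effective one.
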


\begin{proof}
Note that the $\mathbf{Q}$-effectivity and the pseudo-effectivity are equivalent
for divisors on a Mori dream space.
It is enough to show the following statement.
\end{proof}

\begin{claim}
Let $X$ be a Mori dream space and
$D$ a not big but $\mathbf{Q}$-effective (resp. not $\mathbf{Q}$-effective) divisor on $X$.
Then there exists a curve in a covering family $C$ on $X$
such that $C\cdot D=0$ (resp. $C\cdot D<0$).
\end{claim}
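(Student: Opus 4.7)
The plan is to handle the two parts of the claim separately, reducing the non-$\mathbf{Q}$-effective case to the ``$\mathbf{Q}$-effective but not big'' boundary case by a scaling argument.

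First I would treat the case where $D$ is $\mathbf{Q}$-effective but not big. Proposition~\ref{mmp on mds} allows one to run a $D$-MMP, which terminates. Because $D$ is pseudo-effective, the outcome cannot be a Mori fiber space, so one obtains a birational map $\pi\colon X \dashrightarrow X'$ for which the strict transform $D'$ of $D$ is semi-ample. Since $D$ is not big, neither is $D'$, and thus $D'$ defines a morphism $\varphi\colon X' \to Y$ with $\dim Y < \dim X$. Fix a common resolution $p\colon W \to X$, $q\colon W \to X'$. A covering family of curves on $X'$ can then be constructed by intersecting $(\dim F - 1)$ general very ample divisors on $X'$ with a general fiber $F$ of $\varphi$; each member $C'$ satisfies $D' \cdot C' = 0$ because $D' \sim_{\mathbf{Q}} \varphi^{*}(\text{ample})$ and $C'$ lies in a fiber.

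The key step is to transfer this family back to $X$. For a general $C'$, the curve avoids the codimension-$\geq 2$ images of the $q$-exceptional divisors of $W$, so its strict transform $\tilde{C} \subset W$ avoids the $q$-exceptional locus. Since every $p$-exceptional divisor of $W$ is automatically $q$-exceptional (such a divisor is the strict transform of neither an $X$-divisor nor an $X'$-divisor), $\tilde{C}$ avoids the $p$-exceptional locus as well, and $p$ is an isomorphism near $\tilde{C}$. The negativity lemma, applied inductively to the steps of the $D$-MMP, yields a decomposition $p^{*}D = q^{*}D' + E$ with $E \geq 0$ supported on $q$-exceptional divisors; in particular $E \cdot \tilde{C} = 0$, and setting $C := p_{*}\tilde{C}$ we obtain
\[
D \cdot C \;=\; p^{*}D \cdot \tilde{C} \;=\; q^{*}D' \cdot \tilde{C} \;=\; D' \cdot C' \;=\; 0.
\]
Letting $C'$ vary produces a covering family of curves on $X$ with the required property.

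For the case where $D$ is not $\mathbf{Q}$-effective, I would fix an ample divisor $A$ on $X$. Since the pseudo-effective cone of a Mori dream space is rational polyhedral (its Cox ring being finitely generated), there is a smallest positive rational $t_{0}$ such that $D + t_{0} A$ is pseudo-effective; this class lies on the boundary of the cone and is therefore $\mathbf{Q}$-effective but not big. Applying the previous case to $D + t_{0} A$ gives a covering family $\{C\}$ with $(D + t_{0}A) \cdot C = 0$, whence $D \cdot C = -t_{0}(A \cdot C) < 0$ because $A$ is ample. The main obstacle I anticipate is establishing the decomposition $p^{*}D = q^{*}D' + E$ with $E \geq 0$ and $q$-exceptional. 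This follows inductively from the negativity lemma applied to each flip or divisorial contraction, but requires careful bookkeeping of the $q$-exceptional support as the MMP steps are composed.
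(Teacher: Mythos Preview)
Your argument is correct, but it takes a different and somewhat heavier route than the paper's. The paper handles both cases in one stroke: it runs the $D$-MMP to $X_N$, observes that all steps are isomorphisms over an open set $U\subset X_N$ with codimension-$\geq 2$ complement, chooses $C'\subset U$ in a fiber of the Iitaka fibration (not big case) or of the Mori fiber space (not $\mathbf{Q}$-effective case), and takes its strict transform $C$ on $X$. Since $D$ and $D_N$ restrict to the same $\mathbf{Q}$-Cartier class on $U$, the intersection numbers agree directly; no common resolution or negativity lemma is invoked.

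Two remarks on your version. First, your invocation of the negativity lemma is more than you need: all you use is that $p^{*}D-q^{*}D'$ is supported on $q$-exceptional divisors, and this follows immediately from $q_{*}(p^{*}D-q^{*}D')=D'-D'=0$; the effectivity of $E$ plays no role once $\tilde{C}$ avoids the $q$-exceptional locus. Second, your treatment of the not-$\mathbf{Q}$-effective case via scaling to the pseudo-effective boundary is a genuinely different (and pleasant) reduction: the paper instead runs the $D$-MMP itself to a Mori fiber space and picks $C'$ in a fiber, obtaining $D_N\cdot C'<0$ directly. Your route buys a clean reduction to a single case at the cost of appealing to the rational polyhedrality of the pseudo-effective cone; the paper's route avoids that input but requires keeping track of both MMP outcomes.
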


\begin{proof}
We run a $D$-MMP
$$X=X_0\dasharrow X_1\dasharrow\cdots\dasharrow X_N,$$
so that the pushforward $D_N$ of $D$ on $X_N$ is semi-ample (resp.
there exists a $D_N$-Mori fiber space from $X_N$).

Note that all the birational maps above are surjective in codimension one.
Therefore we can find an open subset $U$ of $X_N$ whose complement
has codimension at least two such that
all the birational maps above are identities on $U$.

If $D$ is not big (resp. not $\mathbf{Q}$-effective), we take $D_N$-Iitaka
fiber space (resp. $D_N$-Mori fiber space) $f:X_N\to Y$.
Since the relative dimension of $f$ is positive, 
there is a curve $C'$ in a covering family which is contained in a fiber of $f$.
Since $X_N\setminus U$ has codimension
at least two, we can choose $C'$ so that it is contained in $U$.
Now let $C$ be the strict transform of $C'$ on $X$.
We see that this $C$ has the desired properties.
\end{proof}

\begin{rem}
The intersection theoretic characterization of pseudo-effective divisors
was first proven in \cite[Theorem 0.2]{bdpp}.
On the other hand, the characterization of big divisors in
Proposition \ref{bdpp for Mori dream spaces} does not hold for
an arbitrary variety. For example, it is known that there exists a strictly nef
divisor on a smooth projective surface which is not $\mathbf{Q}$-effective.
See \cite[Example 10.6]{har70}.
\end{rem}


\section{Proof of Theorem \ref{characterization}}
\subsection{Cox rings of Calabi-Yau Mori dream spaces}
In this subsection, we prove the `only if' direction of Theorem \ref{characterization}.

\begin{theo}\label{cox ring of cy}
Let $X$ be a Mori dream space of Calabi-Yau type.
Then Cox rings of $X$ have at worst log canonical singularities.
\end{theo}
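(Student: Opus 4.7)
The plan is to follow the roadmap sketched in the introduction: lift the log Calabi-Yau pair from $X$ to an auxiliary affine bundle $Y$, contract the zero section to obtain an affine variety $Z$ that is a small birational modification of $\Spec R$, and transfer log canonicity back to $\Spec R$.

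In the first stage I will fix ample divisors $A_1,\ldots,A_r$ on $X$ with $r=\rank\Pic(X)$, linearly independent in $\Pic(X)_{\mathbf{Q}}$, and set $\pi\colon Y=\Spec_X\text{Sym}(\bigoplus_i\mathcal{O}_X(A_i))\to X$, the total space of $\bigoplus_i\mathcal{O}_X(-A_i)$. A standard computation yields $K_{Y/X}=\pi^*(\sum_i A_i)$, and for each $i$ a section $s_i\in H^0(X,\mathcal{O}_X(A_i))$ with zero divisor $B_i$, viewed as a fiberwise linear function on $Y$, satisfies $\divi(s_i)=\pi^*B_i+H_i$ where $H_i\subset Y$ is the $i$-th coordinate hyperplane sub-bundle; in particular $H_i\sim -\pi^*A_i$. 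Starting from a log canonical pair $(X,\Delta)$ with $K_X+\Delta\sim_{\mathbf{Q}}0$, I will set $\Delta_Y:=\pi^*\Delta+\sum_i H_i$ and verify $K_Y+\Delta_Y\sim_{\mathbf{Q}}0$. The pair $(Y,\Delta_Y)$ is log canonical because $\pi$ is smooth (so $\pi^*\Delta$ contributes a vertical log canonical boundary) and the $H_i$ form a reduced horizontal simple normal crossing divisor transverse to $\pi^*\Delta$.

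In the second stage I affinize: set $Z=\Spec H^0(Y,\mathcal{O}_Y)=\Spec\bigoplus_{n_1,\ldots,n_r\geq 0}H^0(X,\mathcal{O}(\sum n_iA_i))$ and let $f\colon Y\to Z$ be the canonical morphism. Ampleness of the $A_i$ ensures $f$ is birational with exceptional locus equal to the zero section $X_0\subset Y$. When $r\geq 2$ the contraction is small, so discrepancies transfer directly and $(Z,f_*\Delta_Y)$ is log canonical with $K_Z+f_*\Delta_Y\sim_{\mathbf{Q}}0$; when $r=1$, $X_0=H_1$ is a divisor appearing in $\Delta_Y$ with coefficient one, and a direct discrepancy computation using $K_Y+\Delta_Y\sim_{\mathbf{Q}}0$ yields the same conclusion. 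Finally, any Cox ring $R$ of $X$ built from a group $\Gamma\supseteq\mathbf{Z}\{A_1,\ldots,A_r\}$ contains the coordinate ring of $Z$ as the subring indexed by $\mathbf{Z}_{\geq 0}\{A_1,\ldots,A_r\}$, and the resulting morphism $\Spec R\to Z$ is small birational. Since log canonicity is preserved under small birational morphisms, pulling back $f_*\Delta_Y$ yields an LC boundary on $\Spec R$, establishing the theorem.

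The main obstacle I anticipate is the affinization stage: verifying that $f$ is birational with exceptional locus exactly $X_0$ and descending both log canonicity and numerical triviality to $(Z,f_*\Delta_Y)$. The case $r=1$ is particularly delicate because $f$ is divisorial, so the log discrepancy of $H_1$ must exactly equal $-1$ for the Calabi-Yau condition to descend. A secondary issue is the smallness of $\Spec R\to Z$, which amounts to checking that the extra summands of $R$ indexed by effective classes outside the positive cone $\mathbf{Z}_{\geq 0}\{A_1,\ldots,A_r\}$ modify $Z$ only in codimension at least two.
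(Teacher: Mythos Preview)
Your proposal is correct and follows essentially the same route as the paper's proof: lift the log Calabi--Yau pair to the affine bundle $Y$, contract to $Z=\Spec R(X,\Gamma')$, and then compare $Z$ to the full Cox ring $\Spec R(X,\Gamma)$ via a small birational morphism. The paper handles the descent to $Z$ uniformly (without your $r=1$ versus $r\geq 2$ case split) by observing that $K_Y+\Delta_Y\sim_{\mathbf{Q}}0$ forces $K_Y+\Delta_Y=f^{*}(K_Z+\Delta_Z)$ automatically, and at the end invokes the $\mathbf{Q}$-factoriality of $\Spec R(X,\Gamma)$ (from \cite[Proposition~2.9]{hk}) to pass from the pair being log canonical to the Cox ring itself being log canonical---a step you should make explicit.
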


\begin{proof}
Choose ample line bundles $A_1,\dots,A_r$ on $X$ which are
linearly independent in $\Pic{(X)}_{\mathbf{R}}$, where $r=
\rank\Pic{(X)}$.
Consider the following natural morphism
\[
f:Y = \text{Spec}_X\text{Sym}(\bigoplus_i \mathcal{O}_X(A_i))\to
Z=\Spec R(X,\Gamma'),
\]
where $\Gamma'$ is the semi-group generated by the classes of $A_i$, and let $\pi:Y\to X$ to be the structure morphism.
Since $A_i$ are ample, $f$ is a birational projective morphism which contracts
the zero section of $\pi$, which we denote again by $X$ (see \cite[Proposition 3.5]{har66}).

If $(X,\Delta)$ is a log canonical pair, then it follows that
$$\left(Y, \sum_{i=1}^{r}E_i+\pi^{*}\Delta\right)$$
is also a log canonical pair, where $E_j\subset Y$ is the divisor
corresponding to the projection $\bigoplus_{i}\OO_X(A_i)\to\bigoplus_{i\not=j}\OO_X(A_i)$.
Set $\Delta_Y=\sum_{i=1}^{r}E_i+\pi^{*}\Delta$.
Then it holds that $(K_Y+\Delta_Y)|_X=K_X+\Delta\sim_{\mathbf{Q}}0$.
Since the restriction map
$\Pic{(Y)}_{\mathbf{Q}}\to\Pic{(X)}_{\mathbf{Q}}$
is bijective, this implies that $K_Y+\Delta_Y$
is $\mathbf{Q}$-linearly trivial.
Thus we see that $K_Z+\Delta_Z=f_{*}(K_Y+\Delta_Y)$ is also $\mathbf{Q}$-linearly trivial,
where $\Delta_Z=f_{*}\Delta_Y$. Hence we get the equality
$$K_Y+\Delta_Y=f^{*}(K_Z+\Delta_Z),$$
concluding that $(Z, \Delta_Z)$ is a log canonical pair.

We shall derive the log canonicity of the Cox rings of $X$.

Set $\Gamma=\Gamma'+(-\Gamma')$. Then the multi-section ring
$R(X, \Gamma)$ is a Cox ring of $X$.
Consider the natural injective ring homomorphism
$$R(X, \Gamma')\to R(X, \Gamma).$$

\begin{claim}
The corresponding morphism
$$\Spec \, {R(X, \Gamma)}\to\Spec \, {R(X, \Gamma')}$$
is birational and contracts no divisor. 
\end{claim}
\begin{proof}
Choose an ample divisor $A_0$ from the interior of the cone
spanned by $\Gamma'$.
Then for any positive integer $m>0$ and a non-zero global section
$s\in \text{H}^0(X,\OO_X(mA_0))$, the natural ring homomorphism
$$R(X, \Gamma')_s\to R(X, \Gamma)_s$$
is an isomorphism.

By \cite[Lemma 2.7]{hk}, if we take two global sections $s_1$ and $s_2$ of $mA_0$ such that
the corresponding divisors on $X$ have no common component, then
$\{s_1,s_2\} \subset R(X, \Gamma)$ is a regular sequence. 
Therefore the divisors of $s_1$ and $s_2$ have no common irreducible components, and this concludes the proof.
\end{proof}

From the claim above, we see that the pair
$(\Spec \,  R(X, \Gamma),\Delta')$
is also log canonical, where $\Delta'$ is the effective $\mathbf{Q}$-divisor
naturally defined from $\Delta_Z$. Since $\Spec \, {R(X, \Gamma)}$ is $\mathbf{Q}$-factorial
by \cite[Proposition 2.9]{hk},
we see that $\Spec \, {R(X, \Gamma)}$ itself is log canonical.
\end{proof}

\subsection{Mori dream spaces with log canonical Cox rings}

In this subsection we prove the `if' direction of Theorem \ref{characterization}.

\begin{theo}\label{log canonical implies log Calabi-Yau}
Let $X$ be a Mori dream space whose Cox rings have at worst log canonical singularities.
Then $X$ is of Calabi-Yau type.
\end{theo}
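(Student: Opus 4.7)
The plan is to follow the strategy sketched in the introduction, which divides into three parts: first derive that $-K_X$ is $\mathbf{Q}$-effective from the log canonicity of the Cox ring; then run the anti-canonical MMP to reach a semi-ample model on which the Calabi--Yau property can be established; finally, lift this Calabi--Yau structure back to $X$ along the MMP.

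For the first step, I would reuse the affine bundle construction of Theorem \ref{cox ring of cy}: choose ample line bundles $A_1,\dots,A_r$ on $X$ linearly independent in $\Pic(X)_{\mathbf{Q}}$, form $\pi:Y=\Spec_X\text{Sym}(\bigoplus_i\mathcal{O}_X(A_i))\to X$, and let $f:Y\to Z=\Spec R(X,\Gamma')$ contract the zero section. The small birational morphism $\Spec R(X,\Gamma)\to Z$ exhibited in the proof of Theorem \ref{cox ring of cy} transfers log canonicity of the Cox ring to $Z$, and combined with the identity $K_Y+\sum_iE_i=\pi^*K_X$ (which underlies the adjunction $(K_Y+\sum_iE_i)|_X=K_X$ used in that proof), one deduces log canonicity of the pair $(Y,\sum_iE_i)$. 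For any curve $C$ in a covering family on $X$, regarded on $Y$ via the zero section, the identity gives $K_X\cdot C=(K_Y+\sum_iE_i)\cdot C$, and the log canonicity of $(Y,\sum_iE_i)$ together with the contracting property of $f$ should force this quantity to be nonpositive. Proposition \ref{bdpp for Mori dream spaces} then yields the $\mathbf{Q}$-effectivity of $-K_X$.

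Since $X$ is a Mori dream space, by Proposition \ref{mmp on mds} one can run the $K_X$-MMP for the effective divisor $-K_X$; following \cite{gost}, this terminates in a birational modification $X\dashrightarrow X'$ with $-K_{X'}$ semi-ample, and the associated anti-canonical morphism $g:X'\to X''$ gives $K_{X''}\sim_{\mathbf{Q}}0$. To show that $X''$ is of Calabi--Yau type, I invoke the Hu--Keel description \cite{hk} realizing $X''$ as a GIT quotient $U/T$, where $U$ is an open subset of $\Spec R(X,\Gamma)$ whose complement has codimension at least two and $T=\mathrm{Hom}(\Pic(X),\mathbf{G}_m)$ acts on $U$. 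After possibly shrinking $U$ further (keeping the complement of codimension $\ge 2$), the $T$-action on $U$ is free, so that $U\to X''$ is a principal $T$-bundle. Since $U$ inherits log canonicity from the ambient $\Spec R(X,\Gamma)$ and log canonicity descends along principal torus bundles, $X''$ is log canonical, and together with $K_{X''}\sim_{\mathbf{Q}}0$ this shows $X''$ is of Calabi--Yau type. Finally, I would trace this structure back through the MMP as in \cite[Proof of Theorem 1.2]{gost}: each $K$-negative step can be made $(K+\Delta)$-trivial by transforming the boundary via strict transform, preserving both log canonicity and $\mathbf{Q}$-linear triviality of $K+\Delta$, and produces a boundary on $X$ realizing it as a variety of Calabi--Yau type.

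The main obstacle I anticipate is the first step: extracting the inequality $K_X\cdot C\le 0$ from the log canonicity of $(Y,\sum_iE_i)$ is not automatic, as log canonicity is a discrepancy condition rather than a directly numerical one. The argument must carefully combine the relative geometry of $f$, whose exceptional locus is the zero section $X\subset Y$ containing $C$, with the identity $K_Y+\sum_iE_i=\pi^*K_X$ in order to bound the intersection number with $C$.
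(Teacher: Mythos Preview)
Your three-part strategy matches the paper's, and Step~2 and Step~3 are essentially right (with one wrinkle noted below). The real gap is in Step~1, exactly where you flag it, and the paper's fix is a genuinely different construction from the one you propose.

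In your setup you reuse the ample bundles $A_1,\dots,A_r$ from Theorem~\ref{cox ring of cy}. Then $f:Y\to Z=\Spec R(X,\Gamma')$ contracts only the zero section, which has codimension $r$; the divisors $E_i$ are \emph{not} $f$-exceptional, so the log canonicity of $Z$ (or of the Cox ring, via the small map $\Spec R(X,\Gamma)\to Z$) tells you nothing about the coefficients of the $E_i$. Knowing that $(Y,\sum E_i)$ is itself log canonical (which you could arrange by first proving $X$ is log canonical via the GIT description) still does not bound $(K_Y+\sum E_i)\cdot C$: log canonicity constrains discrepancies, not intersection numbers with contracted curves. Indeed, if your argument worked it would force $(K_Y+\sum E_i)\cdot C=0$ for every such $C$, hence $K_X\equiv 0$, which is far too strong.

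The paper's Step~1 is organized differently. For each covering curve $C$ it chooses divisors $D_1,\dots,D_r$ (depending on $C$) which (a) form a basis of $\Cl(X)_{\mathbf{Q}}$, (b) span a cone containing the entire effective cone, and (c) satisfy $C\cdot D_i\ge 0$. Condition~(b) makes $R(X,\Gamma)$ equal to the Cox ring itself, so $f:Y\to Z=\Spec R(X,\Gamma)$ maps directly to the variety assumed log canonical; a short lemma shows that now the $E_i$ \emph{are} the exceptional divisors of $f$. Writing $f^*K_Z=K_Y+\sum e_iE_i$ with $e_i\le 1$ and restricting to the zero section gives $K_X\sim_{\mathbf{Q}}-\sum(1-e_i)D_i$; combined with condition~(c) this yields $K_X\cdot C\le 0$. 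The point you are missing is that the $D_i$ must be chosen so that the $E_i$ become exceptional over the Cox ring, which forces them to lie \emph{outside} the effective cone rather than in the ample cone.

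A smaller remark on Step~2: the paper does not pass to the anti-canonical image $X''$. It stays at the terminal model $X_N$ of the $(-K_X)$-MMP, shows $X_N$ is log canonical via the GIT quotient description $X_N=U/T$ (using $\mathbf{Q}$-factoriality of $X_N$ to get a geometric, then free, quotient), and takes a general $\Delta_N\sim_{\mathbf{Q}}-K_{X_N}$. Your detour through $X''$ is not wrong in spirit, but $X''$ need not be $\mathbf{Q}$-factorial (so the free-quotient argument is less direct), and $g:X'\to X''$ is not birational, so ``tracing back'' along $g$ is a separate step you would still have to supply.
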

 
\begin{proof}
\begin{step}\label{non-vanishing}
We prove that $- K_X$ is $\mathbf{Q}$-effective.

Let $C$ be an arbitrary irreducible curve on $X$ which belongs to a covering
family.
We take a set of divisors $D_1,\dots,D_r$ on $X$ with the following properties.
\begin{itemize}
\item They forms a basis of $\Cl{(X)}_{\mathbf{Q}}$, so that
if we set $\Gamma=\bigoplus_{i=1}^{r}\mathbf{Z}D_i$,
$R(X,\Gamma)$ is a Cox ring of $X$.
\item The effective cone of $X$ is contained in the cone
spanned by $D_1,\dots,D_r$, so that  
\[
R(X,\Gamma) = \sum_{d_1,\dots,d_r \ge 0} \text{H}^0(X,\sum_i d_iD_i)
\]
holds.

\item $C\cdot D_i\ge 0$ holds for $i=1,\dots, r$.
\end{itemize}

We note that the choice of the $D_i$ depends on $C$.

Let $Y$ be the total space of the direct sum $\bigoplus_i \mathcal{O}_X(-D_i)$:
\[
Y = \text{Spec}_X\text{Sym}(\bigoplus_i \mathcal{O}_X(D_i))
\]
and regard $X \subset Y$ as the zero section.
Then we have $\text{H}^0(Y,\mathcal{O}_Y) \cong R(X,\Gamma)$.
Therefore we have a natural birational
morphism $f: Y \to Z = \text{Spec }R(X,\Gamma)$ which is not necessarily
proper.
 
\begin{lem}
Let $E_i$ be the divisors on $Y$ corresponding to the divisors $D_i$.
Then the exceptional locus of $f$ is the union of the $E_i$.
\end{lem}
 
\begin{proof}
If the assertion does not hold, then there exists a curve $C'$ on $Y$ which
is not contained in the union of the $E_i$ and mapped to a point by $f$.
Since the morphism $f$ is defined by the elements of $R(X, \Gamma)$,
for any $s\in R(X, \Gamma)$, 
the value of $s$ is constant on $C'$.
We note that $C'$ is not necessarily complete.

Take a positive linear combination $D=\sum d_iD_i$ which is very ample.
For a global section $0\not= s \in \text{H}^0(X,D)$, we see that
$\divi_Y(s)=\pi^{*}\divi_X(s)+\sum d_iE_i$, where $\pi:Y\to X$
is the structure morphism.
Suppose that $\pi(C')$ is not a point. 
Then we can take a section $s \in \text{H}^0(X,D)$ such that the divisor
$\divi_{Y}(s)$ does not contain $C'$ but intersect $C'$.
Hence $s$ is not constant on $C'$, a contradiction.
If $\pi(C')$ is a point, $C'$ is a curve in the fiber
$\pi^{-1}(\pi(C'))$, which is identified with $\mathbf{A}^r$ by
choosing a non-zero local section of $\OO_X(D_i)$ around $\pi(C')$ for each $i$.
If $\pi(C')$ is not contained in $\divi_X(s)$,
we see that the restriction of $s$ to the fiber is a non-zero monomial
of exponent $(d_1,\dots, d_r)$ under the identification.
Since $C'$ is assumed to be not contained in the coordinate hyperplanes,
we can find a suitable exponent $(d_1,\dots, d_r)$ such that
the monomial is not constant on $C'$.
\end{proof}
 
Since $Z$ has only log canonical singularities,
we can write $f^*K_Z = K_Y+\sum_i e_iE_i$ such that $e_i \le 1$ for all $i$.
Then $(K_Y + \sum_i e_iE_i) \vert_X \sim_{\mathbf{Q}} 0$ because
$X$ is mapped to a point by $f$.
By the adjunction formula, we have
$K_X \sim_{\mathbf{Q}} - \sum (1-e_i)D_i$.
 
It follows that $(K_X \cdot C) \le 0$.
Since $C$ was arbitrary, we conclude that $-K_X$ is $\mathbf{Q}$-effective by
Proposition \ref{bdpp for Mori dream spaces}.
\end{step}

\begin{step}\label{after non-vanishing}

Since $X$ is a Mori dream space,
we have a ($-K_X$)-MMP
$$X=X_0\dasharrow X_1\dasharrow \cdots\dasharrow X_N,$$
where each step is a birational map and $-K_{X_N}$ is semi-ample, 
because $-K_X$ is $\mathbf{Q}$-effective.

Fix a Cox ring $R(X, \Gamma)$ of $X$.
We recall some facts on the GIT of Cox rings from
\cite[Proposition 2.9]{hk}. First of all,
there exists a canonical action of the torus $T=\text{Hom}{(\Gamma,k^{*})}$
on the affine variety $\Spec \, {R(X, \Gamma)}$, and $X_N$ is the categorical quotient by $T$
of the semi-stable locus $U\subset\Spec \, {R(X, \Gamma)}$ with respect to a character of $T$
which corresponds to an ample divisor on $X_N$ (\cite[Proof of Proposition 2.9]{hk}).

Since $X$ is $\mathbf{Q}$-factorial and $X_N$ is obtained from a MMP starting from $X$,
we see that $X_N$ also is $\mathbf{Q}$-factorial. 
 $\mathbf{Q}$-factoriality of $X_N$, in turn,
implies that the quotient $U\to U/T=X_N$ is the geometric quotient (see \cite[Proposition 1.11(2)]{hk}
and \cite[Corollary 2.4]{hk}). Moreover, by replacing $\Gamma$ with its
subgroup of finite index if necessary, we can assume that the torus $T$ acts on $U$ freely
(\cite[Proposition 2.9]{hk}). Therefore the quotient morphism $U\to U/T=X_N$ is smooth.

Since $\Spec \, {R(X, \Gamma)}$ is log canonical, so is its open subset $U$.
Hence we see that $X_N=U/T$ has log canonical singularities.
By taking a general effective $\mathbf{Q}$-divisor $\Delta_N$ on $X_N$ which is
$\mathbf{Q}$-linearly equivalent to $-K_{X_N}$, we obtain a log Calabi-Yau pair $(X_N,\Delta_N)$
due to the semi-ampleness of $-K_{X_N}$.

Finally we trace back the ($-K_X$)-MMP as in \cite[Proof of Theorem 1.2]{gost}, showing that
$X$ itself is of Calabi-Yau type.
\end{step}
\end{proof}


\section{Characterization of varieties of Fano type revisited}

Using the similar arguments as above, we can reprove the
characterization of varieties of Fano type ($=$\cite[Theorem 1.1]{gost}).

\begin{theo}\label{gost}
Let $X$ be a $\mathbf{Q}$-factorial normal projective variety over $k$.
$X$ is of Fano type if and only if the Cox ring of $X$ is of finite type and
log terminal.
\end{theo}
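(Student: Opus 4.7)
The plan is to adapt the proofs of Theorems \ref{cox ring of cy} and \ref{log canonical implies log Calabi-Yau} by strengthening log canonicity to log terminality throughout. For the only-if direction, \cite{bchm} immediately gives that a $\mathbf{Q}$-factorial variety of Fano type is a Mori dream space, and hence that its Cox ring is finitely generated.

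To obtain log terminality of the Cox ring, I would modify the construction of Theorem \ref{cox ring of cy} as follows. Starting from a klt Fano pair $(X, \Delta_0)$ with $H := -(K_X + \Delta_0)$ ample, choose ample line bundles $A_1, \dots, A_r$ which are linearly independent in $\Pic(X)_{\mathbf{Q}}$ and satisfy $\sum_i A_i \sim_{\mathbf{Q}} H$; such a choice exists by a small generic perturbation of the symmetric configuration $A_i = H/r$. In place of the boundary $\sum_i E_i + \pi^*\Delta_0$ used in the log canonical case, take $\Delta_Y := \pi^*\Delta_0$. Since $\pi$ is smooth, $(Y,\Delta_Y)$ is klt, and the identity $K_Y = \pi^*(K_X + \sum_i A_i)$ gives $K_Y + \Delta_Y \sim_{\mathbf{Q}} 0$. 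When $r \ge 2$ the contraction $f: Y \to Z$ of the zero section is small, so log terminality descends to $(Z, f_*\Delta_Y)$; the case $r = 1$ reduces to the classical fact that the affine cone over a klt Fano pair is klt at the vertex. The small birational morphism $\Spec R(X,\Gamma) \to \Spec R(X,\Gamma')$ from the claim inside the proof of Theorem \ref{cox ring of cy} then transfers log terminality to the Cox ring, which is $\mathbf{Q}$-factorial by \cite[Proposition 2.9]{hk}.

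For the if direction, finite generation of the Cox ring implies that $X$ is a Mori dream space, and I would refine Step \ref{non-vanishing} of the proof of Theorem \ref{log canonical implies log Calabi-Yau} as follows. The klt hypothesis yields the strict discrepancy bound $e_i < 1$, so $K_X \sim_{\mathbf{Q}} -\sum_i (1-e_i) D_i$ with all coefficients strictly positive. For any covering curve $C$ chosen with $C \cdot D_i \ge 0$, at least one $D_i$ must satisfy $C \cdot D_i > 0$ (otherwise $C$ would be numerically trivial against a basis of $\Pic(X)_{\mathbf{Q}}$, contradicting $C \cdot H > 0$ for any ample $H$). Hence $-K_X \cdot C > 0$ for every covering curve, and the bigness criterion of Proposition \ref{bdpp for Mori dream spaces} shows that $-K_X$ is big.

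The $(-K_X)$-MMP then produces a model $X_N$ with $-K_{X_N}$ semi-ample and big. The torus-quotient argument of Step \ref{after non-vanishing} applies verbatim to show that $X_N$ has klt singularities. Writing $-K_{X_N} \sim_{\mathbf{Q}} A + E$ with $A$ ample and $E \ge 0$ an effective $\mathbf{Q}$-divisor (possible since $-K_{X_N}$ is big), the pair $(X_N, \epsilon E)$ is klt for small $\epsilon > 0$, while $-(K_{X_N} + \epsilon E) \sim_{\mathbf{Q}} (1-\epsilon)(-K_{X_N}) + \epsilon A$ is ample as the sum of a nef and an ample divisor. Hence $X_N$ is of Fano type, and tracing back the MMP as in the proof of Theorem \ref{log canonical implies log Calabi-Yau} and in \cite[Proof of Theorem 1.2]{gost} shows that $X$ itself is of Fano type. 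The main technical point throughout is ensuring that log terminality, rather than merely log canonicity, propagates through the construction; this is achieved by replacing $\sum_i E_i + \pi^*\Delta_0$ with $\pi^*\Delta_0$ in the only-if direction and by the Kodaira-type perturbation producing a Fano boundary on $X_N$ in the if direction.
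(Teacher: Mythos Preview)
Your `if' direction is essentially the paper's argument: the strict inequality $e_i < 1$ coming from log terminality forces $-K_X \cdot C > 0$ for every covering curve, Proposition \ref{bdpp for Mori dream spaces} gives bigness, and the rest of Step \ref{after non-vanishing} carries over with ``klt'' in place of ``lc''. The extra detail you supply (the Kodaira-lemma construction of a klt Fano boundary on $X_N$) is fine and just unpacks what the paper states in one line.

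Your `only if' direction, however, takes a genuinely different route from the paper and contains a gap. By setting $\Delta_Y = \pi^*\Delta_0$ with no contribution from the $E_i$, you impose the rigid constraint $\sum_i A_i \sim_{\mathbf{Q}} H = -(K_X + \Delta_0)$ on the nose. Your justification (``perturb $A_i = H/r$'') ignores integrality: the $A_i$ must be honest line bundles in order to form $Y = \Spec_X \mathrm{Sym}(\bigoplus_i \mathcal{O}_X(A_i))$, and a fixed ample $\mathbf{Q}$-class $H$ need not decompose as a sum of $r$ linearly independent integral ample classes, even after varying $\Delta_0$. Concretely, take $X = \mathbb{F}_n$ with $n \ge 1$, negative section $C$ and fibre $f$. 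Every integral ample class has $C$-coefficient $\ge 1$ and $f$-coefficient $\ge n+1$, so any sum $A_1 + A_2$ of two of them has $f$-coefficient $\ge 2n + 2$. On the other hand, for any effective $\Delta_0$ the class $-K_X - \Delta_0$ lies below $-K_X = 2C + (n+2)f$ in the effective order, and $n + 2 < 2n + 2$. Hence no klt boundary $\Delta_0$ whatsoever makes your constraint satisfiable, and the argument breaks down already for Hirzebruch surfaces.

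The paper avoids this by keeping the $E_i$ in the boundary with coefficient $1 - \epsilon$: one sets $\Delta_Y = (1-\epsilon)\sum_i E_i + \pi^*\Delta$, which is still klt, and the requirement becomes $\epsilon \sum_i A_i \sim_{\mathbf{Q}} -(K_X + \Delta)$. The free parameter $\epsilon$ lets one first choose integral ample $A_i$ with $\sum_i A_i$ any large integral multiple of $H$ (which is always arrangeable) and then pick $\epsilon$ accordingly. This is exactly the flexibility your construction lacks. Incidentally, once this is repaired you do not actually need $f$ to be small: since $K_Y + \Delta_Y \sim_{\mathbf{Q}} 0$, one has $K_Y + \Delta_Y = f^*(K_Z + f_*\Delta_Y)$ automatically, and klt descends regardless of the exceptional locus.
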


\begin{proof}
For `only if' direction it is enough to prove that the Cox ring of $X$ is log terminal,
since the finite generation is proved in \cite[Corollary 1.3.2]{bchm}.
We only point out which part of the proof of Theorem \ref{cox ring of cy} should
be modified.

Instead of thinking of the pair
$$\left(Y, \ \sum_{i=1}^{r}E_i+\pi^{*}\Delta\right),$$
we should think of the following klt pair
\[
\left(Y, \ \Delta_Y:=(1-\epsilon)\sum_{i=1}^{r}E_i+\pi^{*}\Delta\right)
\]
for some positive number $\epsilon<1$. 

Then it holds that
$(K_Y+\Delta_Y)|_X=K_X+\Delta+\epsilon\sum_iA_i$, since we have
$\mathcal{O}_Y(E_i)|_X\simeq\mathcal{O}_X(-A_i)$.
By replacing $A_i$ from the beginning if necessary,
we can assume that there exists $\epsilon$ such that
$-(K_X+\Delta)\sim_{\mathbf{Q}}\epsilon\sum_iA_i$, so that
$(K_Y+\Delta_Y)|_X\sim_{\mathbf{Q}}0$.

Therefore we get the equality
$$K_Y+\Delta_Y=f^{*}(K_Z+\Delta_Z)$$
as before, where $\Delta_Z=f_{*}\Delta_Y$.
Hence the pair $(Z, \Delta_Z)$ is klt.
The rest of the proof is the same.

For the proof of the `if' direction, we first prove that
$-K_X$ is big.
If we carry out the same arguments as in Step \ref{non-vanishing} of the proof
of Theorem \ref{log canonical implies log Calabi-Yau},
by the log terminality of the Cox ring we see that $e_i<1$ holds for all $i$.
Since some positive linear combination of the $D_i$ is ample, we deduce that
$C\cdot (-K_X)=C\cdot (\sum (1-e_i)D_i)>0$.
Since $X$ is a Mori dream space, this implies the bigness of $-K_X$ by
Proposition \ref{bdpp for Mori dream spaces}.

Next, by arguing as in Step \ref{after non-vanishing},
we arrive at a model $X_N$ such that $-K_{X_N}$ is semi-ample and big.
We can also show that $X_N$ is log terminal from the log terminality of the Cox ring,
concluding that $X_N$ is of Fano type. The rest is the same.
\end{proof}


\end{document}